\documentclass[12pt,leqno]{amsart}
\usepackage{latexsym,amsmath,amssymb,mathrsfs}
\usepackage{graphicx}

\title[Sobolev Embedding]
{Sobolev Embedding of a Sphere Containing An Arbitrary Cantor Set in the image}

\author[P. Haj\l{}asz]{Piotr Haj\l{}asz}
\address{Piotr Hajlasz, Department of Mathematics, University of Pittsburgh, Pittsburgh, PA 15260, USA, {\tt hajlasz@pitt.edu}}
\author[X. Zhou]{Xiaodan Zhou}
\address{Xiaodan Zhou, Department of Mathematics, University of Pittsburgh, Pittsburgh, PA 15260, USA, {\tt xiz78@pitt.edu}}

\thanks{P.H.\ was supported by NSF grant DMS-1161425.}

\setlength{\oddsidemargin}{1pt}
\setlength{\evensidemargin}{1pt}
\setlength{\topmargin}{1pt}       
\setlength{\textheight}{650pt}    
\setlength{\textwidth}{460pt}     

\belowdisplayskip=18pt plus 6pt minus 12pt \abovedisplayskip=18pt
plus 6pt minus 12pt
\parskip 8pt plus 1pt

\def\eps{\varepsilon}

\def\id{{\rm id\, }}

\def\I{\mathfrak{I}}
\def\i{\mathfrak{i}}
\def\Can{\mathfrak{C}}
\def\c{\mathfrak{c}}

\newtheorem{theorem}{Theorem}
\newtheorem{lemma}[theorem]{Lemma}
\newtheorem{corollary}[theorem]{Corollary}


\def\diam{{\rm diam\,}}

\def\supp{{\rm supp\,}}
\def\loc{{\rm loc}}

\theoremstyle{definition}
\newtheorem{remark}[theorem]{Remark}

\newcommand{\barint}{
\rule[.036in]{.12in}{.009in}\kern-.16in \displaystyle\int }

\newcommand{\barcal}{\mbox{$ \rule[.036in]{.11in}{.007in}\kern-.128in\int $}}

\newcommand{\Sph}{\mathbb S}
\newcommand{\Ball}{\mathbb B}

\newcommand{\bbbn}{\mathbb N}
\newcommand{\bbbr}{\mathbb R}

\def\loc{\operatorname{loc}}
\def\diam{\operatorname{diam}}

\def\supp{\operatorname{supp}}


\def\mvint_#1{\mathchoice
          {\mathop{\vrule width 6pt height 3 pt depth -2.5pt
                  \kern -8pt \intop}\nolimits_{\kern -3pt #1}}%
          {\mathop{\vrule width 5pt height 3 pt depth -2.6pt
                  \kern -6pt \intop}\nolimits_{#1}}%
          {\mathop{\vrule width 5pt height 3 pt depth -2.6pt
                  \kern -6pt \intop}\nolimits_{#1}}%
          {\mathop{\vrule width 5pt height 3 pt depth -2.6pt
                  \kern -6pt \intop}\nolimits_{#1}}}


\numberwithin{theorem}{section} \numberwithin{equation}{section}

\begin{document}

\subjclass[2010]{46E35, 57A50}
\keywords{Sobolev mappings, Alexander horned sphere}
\sloppy


\sloppy

\begin{abstract}
We construct a large class of pathological $n$-dimensional topological spheres in $\bbbr^{n+1}$
by showing that for any Cantor set $C\subset\bbbr^{n+1}$ there is a topological embedding 
$f:\Sph^n\to\bbbr^{n+1}$ of the Sobolev class $W^{1,n}$ whose image contains the Cantor set $C$.
\end{abstract}

\maketitle

\section{Introduction}
In 1924 J. W. Alexander \cite{alexander1}, constructed a homeomorphism $f:\Sph^2\to f(\Sph^2)\subset\bbbr^3$ so that the unbounded component of
$\bbbr^3\setminus f(\Sph^2)$ is not simply connected. In particular it is not homeomorphic to the complement of the
standard ball in $\bbbr^3$. This famous construction known as the {\em Alexander horned sphere} can be easily generalized to higher dimensions.
The aim of this paper is to show that a large class of pathological topological $n$-dimensional spheres, including the Alexander horned sphere,
can be realized as the images of Sobolev $W^{1,n}$ homeomorphisms, each of which being a smooth diffeomorphism outside of a Cantor set in $\Sph^n$ of
Hausdorff dimension zero. For a precise statement, see Theorem~\ref{main}. Here and in what follows $\Sph^n$ will denote the standard unit sphere in $\bbbr^{n+1}$.

The following classical result \cite[Theorem~30.3]{willard} provides a characterization of spaces that are homeomorphic to the ternary Cantor set:
{\em A metric space is homeomorphic to the ternary Cantor set if and only if it is compact, totally disconnected and has no isolated points.}
Recall that the space is {\em totally disconnected} if the only non-empty connected subsets are one-point sets. In what follows by a {\em Cantor set} we 
will mean any subset of Euclidean space that is homeomorphic to the ternary Cantor set.

Recall also that the {\em Sobolev space} $W^{1,p}$ consists of functions in $L^p$ whose distributional gradient is in $L^p$.
By $f\in W^{1,p}(\Sph^n,\bbbr^{n+1})$ we will mean that the components of the mapping $f:\Sph^n\to\bbbr^{n+1}$ are in $W^{1,p}$.
For more details regarding Sobolev spaces see \cite{aubin,EG}. 

Throughout the paper by an {\em embedding} we will mean a homeomorphism onto the image i.e., $f:X\to Y$ is an embedding if $f:X\to f(X)$ is
a homeomorphism. In the literature such an embedding is often called a topological embedding.
The main result of the paper reads as follows.
\begin{theorem}
\label{main}
For any Cantor set $C\subset\bbbr^{n+1}$, $n\geq 2$, there is an embedding $f:\Sph^n\to \bbbr^{n+1}$ such that
\begin{itemize}
\item[(a)] $f\in W^{1,n}(\Sph^n,\bbbr^{n+1})$,
\item[(b)] $C\subset f(\Sph^n)$,
\item[(c)] $f^{-1}(C)\subset\Sph^n$ is a Cantor set of Hausdorff dimension zero,
\item[(d)] $f$ is a smooth diffeomorphism in $\Sph^n\setminus f^{-1}(C)$.
\end{itemize}
\end{theorem}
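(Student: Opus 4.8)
The construction generalizes the Alexander horned sphere: we realize $f$ as the uniform and $W^{1,n}$ limit of smooth embeddings $f_k\colon\Sph^n\to\bbbr^{n+1}$, where $f_{k+1}$ is obtained from $f_k$ by growing, inside each of $2^k$ tiny disks on $\Sph^n$, a pair of thin "fingers" reaching toward the points of $C$. First I would fix combinatorial data. Using that $C$ is compact, totally disconnected and perfect, choose for each $k$ a splitting of $C$ into $2^k$ disjoint relatively clopen pieces $\{C_v\}_{|v|=k}$ indexed by the binary tree, each of diameter tending to $0$ and equal to the disjoint union of its two children. Since $C$ may be wildly embedded (e.g.\ an Antoine necklace), the pieces need not lie in round balls; instead I surround each $C_v$ by a \emph{path--connected} compact neighborhood $M_v\subset\bbbr^{n+1}$ (a small open neighborhood of $C_v$ with its relevant components joined by thin arcs), arranged so that $M_w\subset\inter M_v$ for a child $w$ of $v$, sibling neighborhoods are disjoint, $\diam M_v\to 0$, and $\bigcap_k M_{v_0\cdots v_k}$ is a single point of $C$ for every branch, each point of $C$ arising exactly once. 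Independently, on $\Sph^n$ fix a matching tree of closed topological disks $D_v$ with $D_w\subset\inter D_v$ for a child $w$, disjoint siblings, and $\diam D_v\le\delta_k$ for $|v|=k$ where $\delta_k\to 0$ \emph{as fast as we wish}; put $\Gamma=\bigcap_k\bigcup_{|v|=k}D_v$. Taking $\delta_k$ to decay faster than every geometric rate (say $\delta_k=2^{-k^2}$) forces $\Gamma$ to be a Cantor set of Hausdorff dimension zero, which will yield (c).

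Next I construct the $f_k$ inductively, maintaining the invariant that $f_k$ is a smooth embedding with $f_k(\overline{D_v})\subset\inter M_v$ and $f_k^{-1}(M_v)\subset\inter D_v$ for all $|v|=k$; in particular $f_k$ maps $\Sph^n\setminus\bigcup_{|v|=k}\inter D_v$ off $\bigcup_{|v|=k}M_v\supset C$. To pass from $f_k$ to $f_{k+1}$, inside each $D_v$ ($|v|=k$) keep $f_{k+1}=f_k$ outside $D_v$, pick two disjoint small disks $D_{w_1},D_{w_2}\subset\inter D_v$, and let $f_{k+1}$ on $D_v$ be a "two--fingered" smooth embedding: two thin tentacles running inside $M_v$ from near $f_k(\overline{D_v})$ into $M_{w_1}$ and $M_{w_2}$, with tentacle cores chosen as generic arcs in the $(n{+}1)$--manifold $M_v$ — hence mutually disjoint and avoiding $C$ and the other child neighborhoods, which is where $n\ge 2$ enters — each tentacle capped off by mapping $D_{w_i}$ onto a small disk placed inside $M_{w_i}$ away from the grandchildren neighborhoods. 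This reproduces the invariant at level $k+1$, and keeps $f_{k+1}$ a smooth embedding because each new finger lies in $M_v$, which by the invariant meets the rest of $f_k(\Sph^n)$ only in $f_k(\inter D_v)$.

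The scheme works only if the stage-$k$ modification is small in $W^{1,n}$; since $f_k$ is a fixed smooth map on the tiny set $\bigcup_{|v|=k}D_v$ one has $\int_{\bigcup D_v}|Df_k|^n\to 0$, so it suffices to make $\sum_{|v|=k}\int_{D_v}|Df_{k+1}|^n$ small. I would isolate this as a quantitative lemma: \emph{inside a small ball one can grow a smooth two--fingered tentacle of arbitrarily small $W^{1,n}$ energy}. The mechanism is the conformal invariance of $\int|Df|^n$ under rescaling the domain — so thinning the finger alone does not help — combined with two gains: rescaling the \emph{target} by the factor $\diam M_v\to 0$ contributes $(\diam M_v)^n$; and, writing the finger in polar coordinates $(r,\theta)\in[0,1]\times\Sph^{n-1}$ on $D_v$ with tip at $r=0$, one may concentrate the longitudinal stretching where $r$ is small and the Jacobian weight $r^{n-1}$ is small, a logarithmic reparametrization then bounding the longitudinal contribution by $\mathrm{const}\cdot L\cdot c^{n-1}$ with $c\to 0$, while the transverse contribution is $O((\text{tube radius})^n)$. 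Hence each finger can be grown with energy $<2^{-k-1}$, and $\|f_{k+1}-f_k\|_\infty\lesssim\max_v\diam M_v\to 0$. This is precisely the step that requires $n\ge 2$: for $n=1$ a thin finger of fixed geometric length has $W^{1,1}$ energy bounded below independently of its thickness.

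Finally I pass to the limit. By the previous step $(f_k)$ is Cauchy in $W^{1,n}$ and uniformly convergent, so $f:=\lim f_k\in W^{1,n}(\Sph^n,\bbbr^{n+1})$, giving (a). Since $f_j=f_k$ on $\Sph^n\setminus\bigcup_{|v|=k}D_v$ for all $j\ge k$, $f$ equals a smooth embedding off every neighborhood of $\Gamma$, hence is a smooth diffeomorphism on $\Sph^n\setminus\Gamma$, which gives (d) once $f^{-1}(C)=\Gamma$ is known. For a branch $v_0v_1\cdots$ the point $\gamma=\bigcap_k\overline{D_{v_0\cdots v_k}}$ satisfies $f_j(\gamma)\in M_{v_0\cdots v_j}$ by the invariant, so $f(\gamma)=\bigcap_j M_{v_0\cdots v_j}\in C$; distinct branches end in disjoint $M$'s, so $\gamma\mapsto f(\gamma)$ is a continuous bijection $\Gamma\to C$, hence a homeomorphism — this gives (b) and $\Gamma\subset f^{-1}(C)$ — while the invariant shows points outside $\bigcup_{|v|=k}\inter D_v$ are mapped off $C$, so $f^{-1}(C)=\Gamma$, a zero--dimensional Cantor set, which is (c). Global injectivity of $f$ follows from the same disjointness: two distinct points of $\Sph^n$ are eventually separated into different level-$k$ disks with disjoint target neighborhoods, or else one of them is "finalized" in a region the other avoids. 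The main obstacle is the energy lemma of the third paragraph — producing the fingers as genuine smooth embeddings with prescribed small $W^{1,n}$ energy while honoring all nesting and disjointness constraints — together with the topological point that a wildly embedded $C$ still possesses a path--connected defining sequence through which the tentacles can be routed.
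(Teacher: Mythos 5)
Your proposal follows essentially the same route as the paper: a binary--tree decomposition of $C$, a matching nested family of small disks in $\Sph^n$, smooth embeddings $f_k$ obtained by repeatedly growing thin tentacles, a $\log\log$--type reparametrization to make each tentacle's $W^{1,n}$ energy arbitrarily small, and a uniform/Sobolev limit. You correctly identify the two places where $n\ge2$ is used (generic arcs in $\bbbr^{n+1}$ miss the zero--dimensional set $C$, and the energy of a fixed--length tube can be pushed to zero) and correctly flag the energy lemma as the crux.

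The one organizational difference worth noting: the paper builds an explicit \emph{Cantor tree} of smooth arcs $J_{i_1\ldots i_k}$ in $\bbbr^{n+1}\setminus C$ joining chosen points $A_{i_1\ldots i_k}$ near the pieces $C_{i_1\ldots i_k}$, requires emerging branches to meet at angles $>\pi/2$, and makes each tentacle's cap an orthogonal isometric disk; disjointness of tentacles from the rest of the tree then comes from the orthogonality and the angle condition. You instead route generic cores inside a nested \emph{defining sequence} of path--connected neighborhoods $M_v$ and enforce the invariant $f_k(\overline{D_v})\subset\inter M_v$, $f_k^{-1}(M_v)\subset\inter D_v$, which absorbs all disjointness bookkeeping into the nesting $M_w\subset\inter M_v$. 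Either packaging works; the paper's is more explicit and gives you the angle trick for free, while your invariant makes the injectivity and $f^{-1}(C)=\Gamma$ arguments cleaner. Your energy sketch (``logarithmic reparametrization'') is the right mechanism but would need the explicit computation the paper supplies: with $\eta(x)=\log|\log|x||$ and a truncation $\eta_s^{s+\tau}$, one has $\int|\nabla\eta_s^{s+\tau}|^n\to0$ as $s\to\infty$ while the tower height $\tau$ (hence tentacle length) stays fixed; the bound you wrote, $\mathrm{const}\cdot L\cdot c^{n-1}$, is not quite the correct form, and fixing it amounts to reproducing exactly that calculation.
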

\begin{remark}
Our construction resembles that of the Alexander horned sphere and it will be clear that it can be used to construct a version of the
Alexander horned sphere $f:\Sph^n\to f(\Sph^n)\subset\bbbr^{n+1}$ so that $f\in W^{1,n}$ and $f$ is a smooth diffeomorphism outside 
a Cantor set of Hausdorff dimension zero.
\end{remark}
\begin{remark}
A similar technique to the one used in the proof of Theorem~\ref{main} has also been employed in a variety of different settings \cite{besicovitch1,besicovitch2,cesari,goldsteinh,hajlaszt1,hajlaszt2,iwaniecm1,iwaniecm2,kaufman,malym,ponomarev1,ponomarev2,wengery,wildrickz1,wildrickz2}.
\end{remark}
\begin{remark}
According to Theorem~\ref{main} we can construct a topological sphere in $\bbbr^3$ that is $W^{1,2}$ homeomorphic to $\Sph^2$ and that
contains Antoine's necklace. {\em Antoine's necklace} is a Cantor set in $\bbbr^3$ whose complement is not simply connected. Hence the unbounded component of
$\bbbr^3\setminus f(\Sph^2)$ is also not simply connected. This gives a different example with the same topological consequences as those of the
Alexander horned sphere. In fact, using results of Sher \cite{sher} we will show that there are uncountably many ``essentially different'' examples. 
For a precise statement see Theorem~\ref{main2}.
\end{remark}
\begin{remark}
One cannot in general demand the function constructed in the theorem to be in $W^{1,p}$, $p>n$. Indeed, if $f\in W^{1,p}(\Sph^n,\bbbr^{n+1})$, then the image $f(\Sph^n)$
has finite $n$-dimension Hausdorff measure, but a Cantor set in $\bbbr^{n+1}$ may have positive $(n+1)$-dimensional measure and in that case it cannot be contained
in the image of $f$. The fact that the image $f(\Sph^n)$ has finite $n$-dimensional measure follows from the area formula and the integrability of the Jacobian of $f$.
The fact that the area formula is satisfied for mappings $f\in W^{1,p}(\Sph^n,\bbbr^{n+1})$ with $p>n$ is well known and follows from the following 
observations. The area formula is true for Lipschitz mappings \cite[Theorem~3.3.2]{EG}. The domain $\Sph^n$ is the union of countably many sets on which 
$f$ is Lipschitz continuous \cite[Section~6.6.3]{EG} plus a set of measure zero. On Lipschitz pieces the area formula is satisfied. Since the mapping 
$f$ maps sets of measure zero to sets of $n$-dimensional Hausdorff measure zero, \cite[Theorem~4.2]{henclk}, the area formula is in fact true for $f$.
The proof presented in \cite[Theorem~4.2]{henclk} is in the case of mappings into $\bbbr^{n}$, but the same proof works in the case of mappings into $\bbbr^{n+1}$.
\end{remark}

\begin{remark}
It is well known, \cite[Theorem~4.9]{henclk}, that any homeomorphism $f:\Omega\to f(\Omega)\subset\bbbr^n$ of class $W^{1,n}$,
where $\Omega\subset\bbbr^n$ is open, has the {\em Lusin property}, i.e. it maps sets of measure zero to sets of measure zero.
Reshetnyak \cite{reshetnyak} observed that this is no longer true for embeddings
$f\in W^{1,n}(\Sph^n,\bbbr^m)$, when $m>n\geq 2$. In his example he considered $n=2$ and $m=3$, see also
\cite[Example~5.1]{chm}. Later V\"ais\"al\"a \cite{vaisala} generalized it to any $n\geq 2$ and $m>n$.
In the constructions of Reshetnyak and V\"ais\"al\"a a set of measure zero is mapped to a set of positive $n$-dimensional Hausdorff measure.
Theorem~\ref{main} also provides an example of this type. Indeed, if a Cantor set $C\subset\bbbr^{n+1}$ has positive $(n+1)$-dimensional measure,
then the embedding $f\in W^{1,n}(\Sph^n,\bbbr^{n+1})$ from Theorem~\ref{main} maps the set $f^{-1}(C)$ of Hausdorff dimension zero onto the set
$C\subset f(\Sph^n)$ of positive $(n+1)$-dimensional measure.
Actually, in a context of the Lebesgue area a similar example has already been constructed by Besicovitch \cite{besicovitch1,besicovitch2}, but
Besicovitch did not consider the Sobolev regularity of the mapping. The construction of Besicovitch is very different from that of Reshetnyak and V\"ais\"ala and it is more related to ours.
While Besicovitch's construction deals with a particular Cantor set,
we deal with {\em any} Cantor set and we prove that the resulting mapping $f$ belongs to the Sobolev space $W^{1,n}$.
\end{remark}

We say that two embeddings $f,g:\Sph^n\to\bbbr^{n+1}$, are {\em equivalent} if there is a homeomorphism $h:\bbbr^{n+1}\to\bbbr^{n+1}$ such that
$h(f(\Sph^n))=g(\Sph^n)$. 

The second main result of the paper reads as follows.
\begin{theorem}
\label{main2}
There are uncountably many embeddings $f:\Sph^2\to\bbbr^{3}$ of class $W^{1,2}(\Sph^2,\bbbr^{3})$ which are not equivalent.
\end{theorem}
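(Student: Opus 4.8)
The plan is to deduce Theorem~\ref{main2} from Theorem~\ref{main} by applying the latter to an uncountable family of pairwise inequivalent Cantor sets in $\bbbr^3$. By a theorem of Sher \cite{sher}, $\bbbr^3$ contains an uncountable family of Cantor sets $C_\alpha$, $\alpha\in A$, that are pairwise \emph{inequivalent}, meaning that for $\alpha\neq\beta$ there is no homeomorphism $h\colon\bbbr^3\to\bbbr^3$ with $h(C_\alpha)=C_\beta$. Sher's examples are Antoine-type necklaces --- obtained by interlinking chains of solid tori according to combinatorially distinct defining sequences --- and one may take them so that, in addition, each $C_\alpha$ is wild in every neighborhood of every one of its points: for each $x\in C_\alpha$ and each neighborhood $V$ of $x$ in $\bbbr^3$, the set $C_\alpha\cap V$ contains a sub-Cantor set $N$ with $\bbbr^3\setminus N$ not simply connected --- one may take $N=C_\alpha\cap T$ for a sufficiently deep solid torus $T$ of the defining sequence with $x\in T\subset V$, since $C_\alpha\cap T$ is again an Antoine-type necklace. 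For each $\alpha\in A$ apply Theorem~\ref{main} with $n=2$ to the Cantor set $C_\alpha$, obtaining an embedding $f_\alpha\colon\Sph^2\to\bbbr^3$ with $f_\alpha\in W^{1,2}(\Sph^2,\bbbr^3)$, with $C_\alpha\subset\Sigma_\alpha:=f_\alpha(\Sph^2)$, and with $f_\alpha$ a smooth diffeomorphism on $\Sph^2\setminus f_\alpha^{-1}(C_\alpha)$. It is enough to prove that if $f_\alpha$ and $f_\beta$ are equivalent, then $C_\alpha$ and $C_\beta$ are equivalent, for then Sher's theorem forces the embeddings $f_\alpha$ to be pairwise inequivalent, and there are uncountably many of them.

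The passage between the two notions of equivalence is provided by the \emph{wild set}. For a topological $2$-sphere $\Sigma\subset\bbbr^3$, let $W(\Sigma)$ be the set of points of $\Sigma$ at which $\Sigma$ is not locally flat, i.e.\ the set of $x\in\Sigma$ possessing no neighborhood $V$ in $\bbbr^3$ for which the pair $(V,V\cap\Sigma)$ is homeomorphic to $(\bbbr^3,\bbbr^2\times\{0\})$. Immediately from the definition, $h(W(\Sigma))=W(h(\Sigma))$ for every homeomorphism $h$ of $\bbbr^3$, so $W(\Sigma)$ is a topological invariant of the pair $(\bbbr^3,\Sigma)$. Hence the theorem reduces to the identity
\begin{equation}\label{eq:wild}
W(\Sigma_\alpha)=C_\alpha\qquad\text{for every }\alpha\in A .
\end{equation}
Indeed, granting \eqref{eq:wild}, any homeomorphism $h$ of $\bbbr^3$ with $h(\Sigma_\alpha)=\Sigma_\beta$ satisfies $h(C_\alpha)=h(W(\Sigma_\alpha))=W(\Sigma_\beta)=C_\beta$, exhibiting $C_\alpha$ and $C_\beta$ as equivalent Cantor sets.

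To prove \eqref{eq:wild}: the inclusion $W(\Sigma_\alpha)\subset C_\alpha$ is immediate from property~(d) of Theorem~\ref{main}, because $f_\alpha$ restricts to a smooth diffeomorphism of the open set $\Sph^2\setminus f_\alpha^{-1}(C_\alpha)$ onto $\Sigma_\alpha\setminus C_\alpha$, so $\Sigma_\alpha\setminus C_\alpha$ is a smoothly embedded surface in $\bbbr^3$ and is locally flat at each of its points. For the reverse inclusion $C_\alpha\subset W(\Sigma_\alpha)$, suppose toward a contradiction that $\Sigma_\alpha$ is locally flat at some $x_0\in C_\alpha$, witnessed by a homeomorphism $\psi\colon V\to\bbbr^3$ of a neighborhood $V$ of $x_0$ with $\psi(V\cap\Sigma_\alpha)=\bbbr^2\times\{0\}$. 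Pick, as above, a sub-Cantor set $N\subset C_\alpha\cap V$ with $\bbbr^3\setminus N$ not simply connected. Then $N$ is a compact subset of the locally flat surface $V\cap\Sigma_\alpha$, hence lies in the interior of a locally flat closed $2$-disk $D\subset V\cap\Sigma_\alpha$; since a locally flat $2$-disk in $\bbbr^3$ is flat (a classical fact in dimension three), there is a homeomorphism $\Phi$ of $\bbbr^3$ with $\Phi(D)\subset\bbbr^2\times\{0\}$. But then $\Phi(N)$ is a Cantor set lying in a plane, so $\bbbr^3\setminus\Phi(N)$ is simply connected (a Cantor set in a plane has a neighborhood basis in $\bbbr^3$ of finite disjoint unions of open balls); applying the homeomorphism $\Phi^{-1}$ of $\bbbr^3$, also $\bbbr^3\setminus N$ is simply connected --- contradicting the choice of $N$. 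This proves \eqref{eq:wild} and with it the theorem.

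The step I expect to be the main obstacle is the reverse inclusion $C_\alpha\subset W(\Sigma_\alpha)$, where the specific geometry of Sher's necklaces is used: one must ensure that Sher's uncountable family can be chosen to consist simultaneously of pairwise inequivalent Cantor sets and of Cantor sets that are wild in every neighborhood of every point, and one must carefully transfer the non-simple-connectivity of an Antoine-type sub-necklace through the local flattening chart, invoking the classical facts that a locally flat $2$-disk in $\bbbr^3$ is flat and that a Cantor set lying in a plane has simply connected complement. A minor additional point is that Sher's inequivalence is naturally stated in $S^3$; since all the Cantor sets involved are compact and may be taken to avoid a prescribed point, this is equivalent to inequivalence in $\bbbr^3$.
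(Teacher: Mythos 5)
Your proof is correct and follows the same overall strategy as the paper's: apply Theorem~\ref{main} to Sher's uncountable family of pairwise inequivalent Antoine necklaces, then show that equivalence of the resulting embedded spheres forces equivalence of the necklaces. Where you differ is in the mechanism of the last step. The paper proves a direct lemma (Lemma~\ref{L1}): Antoine's necklace $A$ satisfies $A\cap M\subset\overline{A\setminus M}$ for every smooth surface $M$. It applies this to $h(A_i)$ inside the smooth surface $f_j(\Sph^2)\setminus A_j$ to deduce $h(A_i)\subset A_j$, and by symmetry $h(A_i)=A_j$; this tacitly uses that the hypothesis driving Lemma~\ref{L1}'s proof (every point of the necklace has arbitrarily small neighborhoods containing a Cantor subset with non--simply connected complement) is preserved by the ambient homeomorphism $h$. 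You instead package the argument around the set $W(\Sigma)$ of non--locally flat points, observe its topological invariance, and prove $W(\Sigma_\alpha)=C_\alpha$; the invariance of $W$ then does the bookkeeping automatically, so you never have to reason about $h(A_i)$ as a quasi-necklace. The geometric core is the same in both: a Cantor set lying on a flat disk in $\bbbr^3$ has simply connected complement. But you reach the flat disk via the classical (nontrivial) fact that a locally flat $2$-cell in $\bbbr^3$ is flat, whereas the paper gets a global diffeomorphism flattening $B^3(x,r)\cap M$ elementarily from smoothness of $M$ and then flattens the Cantor set inside that disk using Moise's Theorem~13.7. Your two flagged caveats are both harmless: Antoine necklaces (including Sher's) are wild in every neighborhood of every point because each defining torus intersects the necklace in another Antoine-type necklace, and Sher's result is stated directly for $E^3=\bbbr^3$, so no $S^3$-to-$\bbbr^3$ reduction is needed.
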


The theorem can be generalized to higher dimensions, but we consider the case $n=2$ only because our proof is based on a result of Sher \cite{sher}
about Cantor sets in $\bbbr^3$. Generalization of Theorem~\ref{main2} to $n\geq 3$ would require a generalization of Sher's result to higher dimensions. Since 
this would be a work of purely technical nature with predicted answer, we do not find it particularly interesting.

The paper is organized as follows. Sections~\ref{cantor} and~\ref{tentacles} contain preliminary material needed in the proof of Theorem~\ref{main}.
In Section~\ref{cantor} we introduce some terminology regarding Cantor sets and we construct a Cantor tree. In Section~\ref{tentacles}
we define surfaces, called tentacles, around smooth curves. Finally in Sections~\ref{theproof} and ~\ref{theproof2} we prove Theorems~\ref{main}
and~\ref{main2} respectively.

\noindent
{\bf Acknowledgements.}
The authors would like to thank the referee for valuable comments that led to an improvement of the paper.

\section{Cantor sets and trees}
\label{cantor}

\subsection{Ternary Cantor set}
The ternary Cantor set will be denoted by $\Can$. It is constructed by removing the middle third of the unit interval $[0,1]$, 
and then successively deleting the middle third of each resulting subinterval. 
Denote by $\I^k$ all binary numbers $i_1 \ldots i_k$ such that $i_j\in \{0,1\}$ for $j=1,2,\ldots,k$, and by $\I^\infty$ all binary infinite
sequences $i_1i_2\ldots$
Clearly, the ternary Cantor set $\Can$ can be written as 
$$
\Can=\bigcap_{k=1}^{\infty}\ \bigcup_{i_1\ldots i_k\in \I^k} I_{i_1\ldots i_k},
$$
where $I_{i_1\ldots i_k}$ is one of the $2^k$ closed intervals in the $k$-th level of the construction of the Cantor set $\Can$; 
the binary number $i_1\ldots i_k$ denotes the position of this interval: 
If $i_k=0$, it is the left subinterval of $I_{i_1\ldots i_{k-1}}$, otherwise it is the right subinterval.

We also have that for any $k\in\bbbn$
$$
\Can=\bigcup_{i_1\ldots i_k\in \I^k} \Can_{i_1\ldots i_k}
$$
where $\Can_{i_1\ldots i_k}=\Can\cap I_{i_1\ldots i_k}$.

Note that points in the Cantor set $\Can$ can be uniquely encoded by infinite binary sequences. Indeed, if $\i=i_1i_2\ldots\in\I^\infty$,
then
$$
\{ \c_{\i}\} =\bigcap_{k=1}^\infty I_{i_1\ldots i_k} = \bigcap_{k=1}^\infty \Can_{i_1\ldots i_k}
$$
consists of a single point $\c_{\i}\in\Can$ and 
$$
\Can =\bigcup_{\i\in\I^\infty} \{ \c_{\i} \}.
$$

\subsection{Cantor trees}
Let $C\subset\bbbr^{n+1}$ be a Cantor set and let $f:\Can\to C$ be a homeomorphism. We will write
$C_{i_1\ldots i_k}=f(\Can_{i_1\ldots i_k})$ and $c_{\i}=f(\c_{\i})$ for $\i\in\I^\infty$. Since the mapping $f$ is uniformly continuous,
\begin{equation}
\label{e4}
\max_{i_1\ldots i_k\in \I^k} (\diam C_{i_1\ldots i_k})\to 0
\quad
\text{as $k\to\infty$.}
\end{equation}
For each $k$ and each $i_1\ldots i_k\in \I^k$ we select a point $A_{i_1\ldots i_k}$ such that
\begin{itemize}
\item The point $A_{i_1\ldots i_k}$ does not belong to $C$,
\item The distance of the point $A_{i_1\ldots i_k}$ to $C_{i_1\ldots i_k}$ is less than $2^{-k}$,
\item $A_{i_1\ldots i_k}\neq A_{j_1\ldots j_\ell}$ if $i_1\ldots i_k\neq j_1\ldots j_\ell$.
\end{itemize}
It is easy to see that if $\i=i_1 i_2\ldots\in\I^\infty$, then
$$
A_{i_1\ldots i_k}\to c_{\i}=f(\c_{\i})
\quad
\text{as $k\to\infty$.}
$$
Indeed, $c_{\i}\in C_{i_1\ldots i_k}$ so
$$
|A_{i_1\ldots i_k}-c_{\i}|<2^{-k}+\diam C_{i_1\ldots i_k}\to 0
\quad
\text{as $k\to\infty$.}
$$
Now we are ready to build a {\em Cantor tree} by adding branches $J_{i_1\ldots i_k i_{k+1}}$ connecting
$A_{i_1\ldots i_k}$ to $A_{i_1\ldots i_k i_{k+1}}$. The precise construction goes as follows.

By translating the coordinate system we may assume that the distance between the origin in $\bbbr^{n+1}$
and the Cantor set $C$ is greater than $100$ (that is way too much, but there is nothing wrong with being generous).

Let $J_0$ and $J_1$ be smooth Jordan arcs (i.e., smoothly embedded arcs without self-intersections) of unit speed (i.e.,
parametrized by arc-length) connecting the origin $0$ to the points $A_0$ and $A_1$ respectively.
We also assume that
\begin{itemize}
\item The curve $J_0$ does not intersect with the curve $J_1$ (except for the common endpoint $0$).
\item The curves $J_0$ and $J_1$ avoid the Cantor set $C$.
\item The curves $J_0$ and $J_1$ meet the unit ball 
$\Ball^n(0,1)\subset\bbbr^n\times\{ 0\}\subset\bbbr^{n+1}$ lying in the hyperplane of the first $n$ coordinates only at the origin
and both curves exit $\Ball^n(0,1)$ on the same side of $\Ball^n(0,1)$ in $\bbbr^{n+1}$.
\end{itemize}
The curves $J_0$ and $J_1$ will be called {\em branches of order $1$}.

A simple topological observation is needed here. While the topological structure of a Cantor set $C$ inside $\bbbr^{n+1}$ may be
very complicated (think of Antoine's necklace), no Cantor set can separate open sets in $\bbbr^{n+1}$. Indeed,
by \cite[Corollary~2 of Theorem~IV~3]{HW} compact sets separating open sets in $\bbbr^{n+1}$ must have topological dimension at least $n$ 
but the topological dimension of a Cantor set is $0$, \cite[Example~II~3]{HW}.
Hence we can connect points in the complement of a Cantor set by smooth Jordan arcs that avoid the Cantor set. 
Moreover we can construct such an arc in a way that it is arbitrarily close to the line segment connecting the endpoints. 

Recall that the {\em $\eps$-neighborhood} of a set $A$ is the set of all points whose distance to the set $A$ is less than $\eps$.

Suppose that we have already constructed all branches $J_{i_1\ldots i_k}$ of order $k\geq 1$.  
The construction of the branches of order $k+1$ goes as follows.
$\{ J_{i_1\ldots i_{k+1}}\}$ is a family of $2^{k+1}$ curves such that
\begin{itemize}
\item $J_{i_1\ldots i_{k+1}}$ is a smooth Jordan arc parametrized by arc-length that connects $A_{i_1\ldots i_k}$
(an endpoint of the branch $J_{i_1\ldots i_{k}}$) to $A_{i_1\ldots i_k i_{k+1}}$.
\item The curves $J_{i_1\ldots i_{k+1}}$ do not intersect with the Cantor set $C$, they do not intersect with each other 
(except for the common endpoints) and
they do not intersect with previously constructed branches of orders less than or equal to $k$ (except for the common endpoints).
\item The image of the curve $J_{i_1\ldots i_{k+1}}$ is contained in the $2^{-k}$-neighborhood of the line segment $\overline{A_{i_1\ldots i_k}A_{i_1\ldots i_{k+1}}}$.
\item The angle between the branch $J_{i_1\ldots i_{k}}$ and each of the emerging branches
$J_{i_1\ldots i_{k}0}$ and $J_{i_1\ldots i_{k}1}$ at the point $A_{i_1\ldots i_k}$ where the curves meet is larger than $\pi/2$.
\end{itemize}
The reason why we require the last condition about the angles is far from being clear at the moment, but it will be clarified in Section~\ref{whatfor}.

In what follows, depending on the situation, $J_{i_1\ldots i_{k}}$ will denote either the curve (a map from an interval to $\bbbr^{n+1}$) or its image
(a subset of $\bbbr^{n+1}$), but it will always be clear from the context what interpretation we use.

A {\em Cantor tree} is the closure of the union of all branches
$$
T=\overline{\bigcup_{k=1}^\infty \bigcup_{i_1\ldots i_k\in \I^k} J_{i_1\ldots i_k}}\, .
$$
We also define $T_k$ to be the tree with branches of orders less than or equal to $k$ removed. Formally
$$
T_k=T\setminus B_k
\quad
\text{where}
\quad 
B_k=\bigcup_{s=1}^k\bigcup_{i_1\ldots i_s\in \I^s} J_{i_1\ldots i_s}.
$$
Note that the set $T_k$ is not closed -- it does not contain the endpoints $A_{i_1\ldots i_k}$.

The branches $J_{i_1\ldots i_{k+1}}$ are very close to the sets $C_{i_1\ldots i_k}$
in the following sense.
\begin{lemma}
\label{l1}
A branch $J_{i_1\ldots i_{k+1}}$ is contained in the
$2^{-k+2}+\diam C_{i_1\ldots i_k}$ neighborhood of $C_{i_1\ldots i_k}$.
\end{lemma}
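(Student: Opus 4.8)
The plan is to bound the distance from an arbitrary point on the branch $J_{i_1\ldots i_{k+1}}$ to the set $C_{i_1\ldots i_k}$ by following the chain of estimates that were already set up in the construction of the Cantor tree. First I would recall the two structural facts about the branch: it joins the endpoints $A_{i_1\ldots i_k}$ and $A_{i_1\ldots i_{k+1}}$, and its image lies in the $2^{-k}$-neighborhood of the line segment $\overline{A_{i_1\ldots i_k}A_{i_1\ldots i_{k+1}}}$. So if $x\in J_{i_1\ldots i_{k+1}}$, then $\dist(x,\overline{A_{i_1\ldots i_k}A_{i_1\ldots i_{k+1}}})<2^{-k}$, and in particular there is a point on that segment within $2^{-k}$ of $x$, whose distance to $A_{i_1\ldots i_k}$ is at most $|A_{i_1\ldots i_k}-A_{i_1\ldots i_{k+1}}|$. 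The triangle inequality then gives
\[
|x-A_{i_1\ldots i_k}|<2^{-k}+|A_{i_1\ldots i_k}-A_{i_1\ldots i_{k+1}}|.
\]

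Next I would control $|A_{i_1\ldots i_k}-A_{i_1\ldots i_{k+1}}|$ using the defining property that $A_{i_1\ldots i_k}$ is within $2^{-k}$ of $C_{i_1\ldots i_k}$ and $A_{i_1\ldots i_{k+1}}$ is within $2^{-(k+1)}$ of $C_{i_1\ldots i_{k+1}}\subset C_{i_1\ldots i_k}$. Picking points $p\in C_{i_1\ldots i_k}$ with $|A_{i_1\ldots i_k}-p|<2^{-k}$ and $q\in C_{i_1\ldots i_{k+1}}\subset C_{i_1\ldots i_k}$ with $|A_{i_1\ldots i_{k+1}}-q|<2^{-(k+1)}$, and using $|p-q|\le\diam C_{i_1\ldots i_k}$, I get
\[
|A_{i_1\ldots i_k}-A_{i_1\ldots i_{k+1}}|<2^{-k}+2^{-(k+1)}+\diam C_{i_1\ldots i_k}.
\]
Combining this with the previous display and the fact that $A_{i_1\ldots i_k}$ itself is within $2^{-k}$ of $C_{i_1\ldots i_k}$, I obtain that every $x\in J_{i_1\ldots i_{k+1}}$ satisfies
\[
\dist(x,C_{i_1\ldots i_k})\le|x-p|<2^{-k}+2^{-k}+2^{-(k+1)}+\diam C_{i_1\ldots i_k}+2^{-k}<2^{-k+2}+\diam C_{i_1\ldots i_k},
\]
since $3\cdot 2^{-k}+2^{-(k+1)}=7\cdot 2^{-(k+1)}<8\cdot 2^{-(k+1)}=2^{-k+2}$. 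This is exactly the asserted inclusion.

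This argument is entirely elementary — a bookkeeping exercise in the triangle inequality — so I do not anticipate any real obstacle; the only mild care needed is to keep track of which neighborhood radius is attached to which object and to make sure the powers of $2$ add up to the clean constant $2^{-k+2}$ in the statement. An alternative, slightly slicker route would be to first prove a lemma that $A_{i_1\ldots i_{k+1}}$ lies in the $(2^{-k+1}+\diam C_{i_1\ldots i_k})$-neighborhood of $C_{i_1\ldots i_k}$ and then add the $2^{-k}$ coming from the segment-neighborhood condition, but the direct chaining above is already short enough that splitting it seems unnecessary.
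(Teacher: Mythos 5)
Your argument is correct and follows essentially the same route as the paper's: a chain of triangle inequalities from a point on $J_{i_1\ldots i_{k+1}}$ through the line segment, its endpoints, and the defining proximity of $A_{i_1\ldots i_k}$, $A_{i_1\ldots i_{k+1}}$ to $C_{i_1\ldots i_k}$. The only cosmetic difference is that you retain the sharper bound $2^{-(k+1)}$ for $\dist(A_{i_1\ldots i_{k+1}},C_{i_1\ldots i_{k+1}})$ where the paper simply replaces it by $2^{-k}$ (using $C_{i_1\ldots i_{k+1}}\subset C_{i_1\ldots i_k}$), so the paper lands exactly on $2^{-k+2}$ while you undershoot it; both are fine.
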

\begin{proof}
A branch $J_{i_1\ldots i_{k+1}}$ connects the points $A_{i_1\ldots i_k}$ and $A_{i_1\ldots i_{k+1}}$. The distance of
$A_{i_1\ldots i_k}$ and $A_{i_1\ldots i_{k+1}}$ to the set $C_{i_1\ldots i_k}$ is less than $2^{-k}$
(because $C_{i_1\ldots i_{k+1}}\subset C_{i_1\ldots i_k}$). Hence
$$
|A_{i_1\ldots i_k} - A_{i_1\ldots i_{k+1}}|< 2\cdot 2^{-k} + \diam C_{i_1\ldots i_k}
$$
so by the triangle inequality the line segment $\overline{A_{i_1\ldots i_k}A_{i_1\ldots i_{k+1}}}$ is contained in the
$3\cdot 2^{-k}+\diam C_{i_1\ldots i_k}$ neighborhood of $C_{i_1\ldots i_k}$. Since
$J_{i_1\ldots i_{k+1}}$ is contained in the $2^{-k}$ neighborhood of the line segment, the lemma follows.
\end{proof}
\begin{corollary}
\label{c1}
$T_k$ is contained in the
$$
\eps_k:= 2^{-k+2}+ \max_{i_1\ldots i_{k}\in\I^k} (\diam C_{i_1\ldots i_k})
$$ 
neighborhood of the Cantor set $C$ and $\eps_k\to 0$ as $k\to\infty$.
\end{corollary}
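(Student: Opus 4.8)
The plan is to combine Lemma~\ref{l1} with the observation that $T_k$, by definition, consists only of branches of order $k+1$ and higher. First I would fix a point $x\in T_k$. Since $T_k=T\setminus B_k$, the point $x$ lies on (the closure of) some branch $J_{i_1\ldots i_s}$ with $s\geq k+1$, say $s=m+1$ with $m\geq k$, so that $x\in J_{i_1\ldots i_m i_{m+1}}$. By Lemma~\ref{l1} applied at level $m$, this branch is contained in the $\bigl(2^{-m+2}+\diam C_{i_1\ldots i_m}\bigr)$-neighborhood of $C_{i_1\ldots i_m}$. Since $m\geq k$ we have $2^{-m+2}\leq 2^{-k+2}$, and since $i_1\ldots i_m$ extends $i_1\ldots i_k$ we have $C_{i_1\ldots i_m}\subset C_{i_1\ldots i_k}$, hence $\diam C_{i_1\ldots i_m}\leq \diam C_{i_1\ldots i_k}\leq \max_{j_1\ldots j_k\in\I^k}(\diam C_{j_1\ldots j_k})$. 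Therefore $x$ lies in the $\eps_k$-neighborhood of $C_{i_1\ldots i_m}\subset C$, which shows $T_k$ is contained in the $\eps_k$-neighborhood of $C$. (Points of $T_k$ that are limits of branch points rather than interior branch points can be handled by passing to the closure, or simply absorbed since the estimate is with a closed neighborhood and the bound is uniform; one may also note $\overline{T_k}$ satisfies the same containment.)

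It remains to check $\eps_k\to 0$. The first term $2^{-k+2}\to 0$ trivially, and the second term $\max_{i_1\ldots i_k\in\I^k}(\diam C_{i_1\ldots i_k})\to 0$ is exactly \eqref{e4}, which holds because $f:\Can\to C$ is uniformly continuous. Adding the two, $\eps_k\to 0$ as $k\to\infty$.

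I do not expect any real obstacle here; the statement is essentially a bookkeeping consequence of Lemma~\ref{l1} together with the nesting $C_{i_1\ldots i_m}\subset C_{i_1\ldots i_k}$ for extensions of multi-indices and the monotonicity $2^{-m+2}\leq 2^{-k+2}$ for $m\geq k$. The only point requiring a word of care is the presence of limit points in $T_k$ that are not interior to any single branch, but these are handled by taking closures or by observing that the neighborhood bound, being uniform over all branches of order $\geq k+1$, passes to the closure of their union.
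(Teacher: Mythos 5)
Your proof is correct and follows essentially the same route as the paper: apply Lemma~\ref{l1} to a branch $J_{i_1\ldots i_{s+1}}$ of $T_k$ with $s\ge k$, then use $2^{-s+2}\le 2^{-k+2}$ and the nesting $C_{i_1\ldots i_s}\subset C_{i_1\ldots i_k}$ to absorb the bound into $\eps_k$, with $\eps_k\to 0$ coming from~\eqref{e4}. Your extra remark about limit points (the points of $C$ itself lying in $T_k$) is a harmless and correct observation that the paper leaves implicit, since $C$ trivially lies in every neighborhood of $C$.
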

\begin{proof}
Indeed, the branches of $T_k$ are of the form $J_{i_1\ldots i_{s+1}}$, $s\geq k$. 
Each such branch is contained in the
$2^{-s+2}+\diam C_{i_1\ldots i_s}$ neighborhood of
$C_{i_1\ldots i_s}\subset C$. Since $s\geq k$ and 
$C_{i_1\ldots i_s}\subset C_{i_1\ldots i_k}$ we have
$$
2^{-s+2}+\diam C_{i_1\ldots i_s}\leq 2^{-k+2}+\diam C_{i_1\ldots i_k}
$$
so $T_k$ is contained in the $\eps_k$ neighborhood of $C$. The fact that $\eps_k \to 0$ follows from 
\eqref{e4}.
The proof is complete.
\end{proof}
Since the sets $B_k$ are compact and their complements $T_k=T\setminus B_k$ are in close proximity of $C$ by
Corollary~\ref{c1}, it easily follows that
$$
T=C\cup\bigcup_{k=1}^\infty \bigcup_{i_1\ldots i_l\in \I^k} J_{i_i\ldots i_k}.
$$

\subsection{What is it for?}
\label{whatfor}

The idea of the proof of Theorem~\ref{main} is to build a surface that looks very similar to the tree $T$ with
one dimensional branches $J_{i_1\ldots i_k}$ of the tree $T$ replaced by smooth thin surfaces built around the curves $J_{i_1\ldots i_k}$; such surfaces
will be called {\em tentacles}.
The parametric surface $f:\Sph^n\to \bbbr^{n+1}$ will be constructed as a limit of smooth surfaces $f_k:\Sph^n\to\bbbr^{n+1}$. This sequence will be defined by
induction. In the step $k$ we replace all branches $J_{i_1\ldots i_k}$ by smooth surfaces -- tentacles.
Such surfaces will be very close to the branches $J_{i_1\ldots i_k}$ and they will pass through the endpoints $A_{i_1\ldots i_k}$.
The only place where the set $T_k$ gets close to the branch $J_{i_1\ldots i_k}$ is the endpoint $A_{i_1\ldots i_k}$
where the two branches $J_{i_1\ldots i_k0}$ and $J_{i_1\ldots i_k1}$ of the set $T_k$ emerge. The surface around $J_{i_1\ldots i_k}$,
and passing through the point $A_{i_1\ldots i_k}$ will be orthogonal to the curve $J_{i_1\ldots i_k}$ at the point $A_{i_1\ldots i_k}$.
Since the branches $J_{i_1\ldots i_k0}$ and $J_{i_1\ldots i_k1}$ emerging from that point form angles larger than $\pi/2$ with $J_{i_1\ldots i_k}$
the surface will not intersect the branches $J_{i_1\ldots i_k0}$ and $J_{i_1\ldots i_k1}$. By making the surfaces around $J_{i_1\ldots i_k}$ thin enough 
we can make them disjoint from the set $T_k$ (note that $A_{i_1\ldots i_k}$ does not belong to $T_k$).

\section{Sobolev tentacles}
\label{tentacles}

It is well known and easy to prove
that
$\eta(x)=\log|\log|x||\in W^{1,n}(\Ball^{n}(0,e^{-1}))$.
Define the truncation of $\eta$ between levels $s$ and
$t$, $0<s<t<\infty$ by
$$
\eta_{s}^{t}(x) =
\left\{
\begin{array}{ccc}
t-s    & \mbox{if $\eta(x)\geq t$,}\\
\eta(x)-s &   \mbox{if $s\leq \eta(x) \leq t$}\\
0 & \mbox{if $\eta(x) \leq s$.}
\end{array}
\right.
$$
Fix an arbitrary $\tau>0$. For every $\delta>0$ there is a sufficiently large $s$
such that
$\widetilde{\eta}_{\delta,\tau}:=\eta_{s}^{s+\tau}$ is a Lipschitz function on
$\bbbr^n$ with the properties:
$$
\supp\widetilde{\eta}_{\delta,\tau}\subset \Ball^{n}(0,\delta/2),
$$
$$
\text{$0\leq\widetilde{\eta}_{\delta,\tau}\leq\tau$
and $\widetilde{\eta}_{\delta,\tau}=\tau$ in a neighborhood $\Ball(0,\delta')$ of $0$},
$$
$$
\int_{\bbbr^n}|\nabla \widetilde{\eta}_{\delta,\tau}|^{n} < \delta^n.
$$
The function $\widetilde{\eta}_{\delta,\tau}$ is not smooth because it is
defined as a truncation, however, mollifying
$\widetilde{\eta}_{\delta,\tau}$
gives a smooth function, denoted by
$\eta_{\delta,\tau}$, with the same properties
as those of $\widetilde{\eta}_{\delta,\tau}$ listed above.
In particular
\begin{equation}
\label{e1}
\int_{\bbbr^n}|\nabla \eta_{\delta,\tau}|^{n} < \delta^n.
\end{equation}

The graph of $\eta_{\delta,\tau}$ restricted to the ball $\overline{\Ball}^n(0,\delta)$ is contained in the cylinder
\begin{equation}
\label{e2}
\{ (x_1,\ldots,x_{n+1})|\, x_1^2+\ldots +x_n^2\leq\delta^2,\ 0\leq x_{n+1}\leq\tau\},
\end{equation}
and it forms a slim ``tower'' around the $x_{n+1}$-axis. The function $\eta_{\delta,\tau}$ equals zero in the annulus $\Ball^n(0,\delta)\setminus \Ball^n(0,\delta/2)$ and 
equals $\tau$ in the ball $\Ball^n(0,\delta')$.

Consider now a smooth Jordan arc
$\gamma:[-1,\tau+1]\to\bbbr^{n+1}$ parametrized by arc-length. We want to 
construct a smooth mapping $\gamma_{\delta}:\Ball^n(0,\delta)\to\bbbr^{n+1}$ whose image will be a smooth, thin, tentacle-shaped
surface around the curve $\gamma|_{[0,\tau]}$. To do this we will apply a diffeomorphism $\Phi$ mapping the cylinder
\eqref{e2} onto a neighborhood of the image of the curve $\gamma$. The tentacle-like surface will be the image of the
graph of $\eta_{\delta,\tau}$ under the diffeomorphism $\Phi$.

The construction of a diffeomorphism $\Phi$ follows a standard procedure. Let
$$
v_1,\ldots,v_n:[-1,\tau+1]\to T\bbbr^{n+1}
$$
be a smooth orthonormal basis in the orthogonal complement of the tangent space to the curve $\gamma$, i.e., for every
$t\in [-1,\tau+1]$,
$\langle v_1(t),\ldots,v_n(t),\gamma'(t)\rangle$
is a positively oriented orthonormal basis of $T_{\gamma(t)}\bbbr^{n+1}$. Now we define
$$
\Phi(x_1,\ldots,x_{n+1})=\gamma(x_{n+1})+\sum_{i=1}^n x_iv_i(x_{n+1})
\quad
\text{for $x\in\bbbr^{n+1}$ with $-1\leq x_{n+1}\leq\tau+1$.}
$$ 
Clearly, $\Phi$ is smooth and its Jacobian equals $1$ along the $x_{n+1}$ axis, $-1< x_{n+1}<\tau+1$.
Hence $\Phi$ is a diffeomorphism in a neighborhood of any point on the $x_{n+1}$-axis, $-1< x_{n+1}< \tau+1$.
Using compactness of the image of the curve $\gamma$ it easily follows that there is a $\delta_0>0$ such that for all $0<\delta<\delta_0$, $\Phi$ is
a diffeomorphism in an open neighborhood of the cylinder \eqref{e2}. Now we define
$$
\gamma_{\delta}:\overline{\Ball}^n(0,\delta)\to\bbbr^{n+1},
\quad
\gamma_{\delta}(x_1,\ldots,x_n)=\Phi(x_1,\ldots,x_{n},\eta_{\delta,\tau}(x_1,\ldots,x_n)).
$$
Since
$$
\frac{\partial\gamma_\delta}{\partial x_i} =
\frac{\partial\Phi}{\partial x_i}+\frac{\partial\Phi}{\partial x_{n+1}}\, \frac{\partial\eta_{\delta,\tau}}{\partial x_i}
$$
it follows that
$$
|D\gamma_\delta|\leq \sqrt{n}\Vert D\Phi\Vert_\infty(1+|\nabla\eta_{\delta,\tau}|),
$$
where $\Vert D\Phi\Vert_\infty$ is the supremum of the Hilbert-Schmidt norms $|D\Phi|$ over the cylinder \eqref{e2}. Hence using \eqref{e1},
for every $\eps>0$ we can find $\delta>0$ so small that
\begin{equation}
\label{e3}
\int_{\Ball^n(0,\delta)} |D\gamma_\delta|^n\leq C(n)\Vert D\Phi\Vert_\infty^n\delta^n<\eps.
\end{equation}
Observe that $\delta$ depends on $\gamma$ (because $\Vert D\Phi\Vert_\infty$ depends on $\gamma$).

The tentacle $\gamma_\delta$ maps the annulus $\overline{\Ball}^n(0,\delta)\setminus \Ball^n(0,\delta/2)$ onto the isometric annulus in the 
hyperplane orthogonal to $\gamma$ at $\gamma(0)$. Indeed, for $x\in\overline{\Ball}^n(0,\delta)\setminus \Ball^n(0,\delta/2)$, 
$\eta_{\delta,\tau}(x)=0$ and hence
$$
\gamma_\delta(x)=\Phi(x_1,\ldots,x_n,0)=\gamma(0)+\sum_{i=1}^n x_iv_i(0)
$$
is an affine isometry.
For a similar reason $\gamma_\delta$ maps the ball $\overline{\Ball}^n(0,\delta')$ onto the isometric ball in the hyperplane
orthogonal to $\gamma$ at $\gamma(\tau)$. Finally for $x\in \overline{\Ball}^n(0,\delta/2)\setminus \Ball^n(0,\delta')$,
$\gamma_\delta$ creates a smooth thin surface around the curve $\gamma$ that connects the annulus at $\gamma(0)$
with the ball at $\gamma(\tau)$.

Composition with a translation allows us to define a tentacle $\gamma_\delta:\overline{\Ball}^n(p,\delta)\to\bbbr^{n+1}$
centered at any point $p\in\bbbr^n$.

\section{Proof of Theorem~\ref{main}}
\label{theproof}

We will construct the Sobolev embedded surface $f:\Sph^n\to \bbbr^{n+1}$ as a limit of smooth embedded surfaces
$f_k:\Sph^n\to \bbbr^{n+1}$. 

By replacing $\Sph^n$ with a diffeomorphic submanifold (still denoted by $\Sph^n$) we may assume that it contains the unit ball
$$
\Ball^n=\Ball^n(0,1)\subset\bbbr^n\times \{ 0\}\subset\bbbr^{n+1}.
$$
lying in the hyperplane of the first $n$ coordinates.

Since the distance of the Cantor set $C$ to the origin is larger than $100$, the only parts of the Cantor tree $T$ that are close to $\Ball^n$ are the branches 
$J_0$ and $J_1$ that connect the origin to $A_0$ and $A_1$. Since the branches meet $\Ball^n$ only at the origin 
and leave $\Ball^n$ on the same side of $\Ball^n$,
we can assume that $\Sph^n$ meets $T$ only at the origin.

Now we will describe the construction of $f_1$. As indicated in Section~\ref{whatfor} we want to grow two tentacles 
from $\Sph^n$ near the branches $J_0$ and $J_1$ all the way to points $A_0$ and $A_1$, but we want to make sure that the tentacles
do not touch the set $T_1$.

To do this we choose two distinct points $p_0,p_1\in\Ball^n$ close to the origin and modify the curves $J_0$ and $J_1$ only near the origin, so that the modified
Jordan arcs $\gamma^0$ and $\gamma^1$ emerge from the points $p_0$ and $p_1$ instead of the origin, and they are orthogonal to $\Ball^n$
at the points $p_0$ and $p_1$. The curves $\gamma^0$ and $\gamma^1$ quickly meet with $J_0$ and $J_1$ and from the points where they meet they coincide with 
$J_0$ and $J_1$, so all non-intersection properties of the curves are preserved. Since the curves are modified only at their beginnings, outside that place
they are identical with $J_0$ and $J_1$.

Next, we find $\delta_1>0$ so small that the balls $\overline{\Ball}^n(p_0,\delta_1)$ and
$\overline{\Ball}^n(p_1,\delta_1)$ are disjoint and contained in $\Ball^n$ and that there are disjoint tentacles
$$
\gamma_{\delta_1}^i:\overline{\Ball}^n(p_i,\delta_1)\to\bbbr^{n+1}
\quad
\text{for $i=0,1$}
$$
along the curves $\gamma^0$ and $\gamma^1$ such that
$$
\int_{\Ball(p_i,\delta_1)}|D\gamma_{\delta_1}^i|^n< 4^{-n}
\quad
\text{for $i=0,1$.}
$$
Observe that $\gamma_{\delta_1}^i(p_i)=A_i$ for $i=0,1$.

Note that the images of small balls $\Ball^n(p_0,\delta_1')$ and $\Ball^n(p_1,\delta_1')$
are isometric balls, orthogonal to the curves $\gamma^0$ and $\gamma^1$ (and hence to the curves
$J_0$ and $J_1$) at the endpoints $A_0$ and $A_1$. Since the branches $J_{00}$, $J_{01}$ form
angles larger than $\pi/2$ with the curve $\gamma^0$ at $A_0$ and the branches $J_{10}$, $J_{11}$
form angles larger than $\pi/2$ with the curve $\gamma^1$  at $A_1$ we may guarantee, by making the
tentacles sufficiently thin, that they are disjoint from the set $T_1$ (observe that the endpoints
$A_0$ and $A_1$ {\em do not} belong to $T_1$). 

Also each annulus $\overline{\Ball}^n(p_i,\delta_1)\setminus\Ball^n(p_i,\delta_1/2)$ for $i=0,1$
is mapped isometrically by $\gamma^i_{\delta_1}$ onto an annulus centered at $\gamma^i(0)=p_i$
in the hyperplane orthogonal to $\gamma^i$ at $\gamma^i(0)=p_i$. Since the curve $\gamma^i$ is 
orthogonal to $\Ball^n$ at $\gamma^i(0)=p_i$, the annulus
$\overline{\Ball}^n(p_i,\delta_1)\setminus\Ball^n(p_i,\delta_1/2)$
is mapped isometrically onto itself. By choosing an appropriate orthonormal frame
$v_1^i,\ldots v_n^i$ in the definition of $\gamma_{\delta_1}^i$ we may assume that
$\gamma_{\delta_1}^i$ is the identity in the annulus. This guarantees that the mapping 
$$
f_1(x) =
\left\{
\begin{array}{ccc}
\gamma_{\delta_1}^0(x)    & \mbox{if} & x\in\overline{\Ball}^n(p_0,\delta_1),\\
\gamma_{\delta_1}^1(x)    & \mbox{if} & x\in\overline{\Ball}^n(p_1,\delta_1),\\
x                    & \mbox{if} 		 & x\in\Sph^n\setminus(\Ball^n(p_0,\delta_1)\cup \Ball^n(p_1,\delta_1))
\end{array}
\right.
$$
is continuous and hence smooth.
The construction guarantees also that $f_1$ is a smooth embedding of $\Sph^n$ into $\bbbr^{n+1}$
with the image that is disjoint from $T_1$.

The mapping $f_1$ maps the small balls $\overline{\Ball}^n(p_0,\delta_1')$ and
$\overline{\Ball}^n(p_1,\delta_1')$ onto isometric balls centered at $A_0$ and $A_1$ respectively with $f_1(p_i)=A_i$ for $i=0,1$. 
Now the mapping $f_2$ will be obtained from $f_1$ by adding four more tentacles: from the ball $f_1(\Ball^n(p_0,\delta_1'))$ centered at $A_0$ there will be two
tentacles connecting this ball to the points $A_{00}$ and $A_{01}$ and from the ball at $f_1(\Ball^n(p_1,\delta_1'))$ centered at $A_1$ there will be two
tentacles connecting this ball to the points $A_{10}$ and $A_{11}$. More precisely the inductive step is described as follows.

Suppose that we have already constructed a mapping $f_k$, $k\geq 1$ such that
\begin{itemize}
\item $f_k$ is a smooth embedding of $\Sph^n$ into $\bbbr^{n+1}$ whose image is disjoint from $T_k$.
\item There are $2^k$ disjoint balls 
$\overline{\Ball}^n(p_{i_1\ldots i_k},\delta_k)\subset \Ball^n$ and $2^k$
tentacles
$$
\gamma_{\delta_k}^{i_1\ldots i_k}:\overline{\Ball}^n(p_{i_1\ldots i_k},\delta_k)\to\bbbr^{n+1}
$$
for $i_1\ldots i_k\in \I^k$ such that
\begin{itemize}
\item[$\Diamond$]
$\gamma_{\delta_k}^{i_1\ldots i_k}(p_{i_1\ldots i_k})=A_{i_1\ldots i_k}.$
\item[$\Diamond$]
The image of $\gamma^{i_1\ldots i_k}_{\delta_k}$ is in the $2^{-k}$ neighborhood of the curve 
$J_{i_1\ldots i_k}$.
\item[$\Diamond$] We have
\begin{equation}
\label{e5}
\int_{\Ball^n(p_{i_1\ldots i_k},\delta_k)}|D\gamma_{\delta_k}^{i_1\ldots i_k}|^n\, dx < 4^{-nk}.
\end{equation}
\end{itemize}
\item The mapping $f_k$ satisfies
$$
f_k =
\left\{
\begin{array}{ccc}
\gamma_{\delta_k}^{i_1\ldots i_k}    & \mbox{in} & \mbox{$\overline{\Ball}^n(p_{i_1\ldots i_k},\delta_k)$  for $i_1\ldots i_k\in \I^k$},\\
f_{k-1}                              & \mbox{in} &\Sph^n\setminus\bigcup_{i_1\ldots i_k\in \I^k}\Ball^n(p_{i_1\ldots i_k},\delta_k).
\end{array}
\right.
$$
\end{itemize}
Observe that $f_k(p_{i_1\ldots i_k})=A_{i_1\ldots i_k}$ and that for some small $0<\delta_k'<\delta_k$,
$f_k$ maps balls $\overline{\Ball}^n(p_{i_1\ldots i_k},\delta_k')$ onto isometric balls centered at $A_{i_1\ldots i_k}$.

Now we will describe the construction of the mapping $f_{k+1}$.

For each $i_1\ldots i_k\in \I^k$ we choose two points
$$
p_{i_1\ldots i_k 0}, p_{i_1\ldots i_k 1} \in \Ball^n(p_{i_1\ldots i_k},\delta_k')
$$
and modify the curves $J_{i_1\ldots i_k 0}$ and $J_{i_1\ldots i_k 1}$ to 
$\gamma^{i_1\ldots i_k 0}$ and $\gamma^{i_1\ldots i_k 1}$ in a pretty similar way as we did for the curves 
$\gamma^0$ and $\gamma^1$: the new curves $\gamma^{i_1\ldots i_k 0}$ and $\gamma^{i_1\ldots i_k 1}$
emerge from the points $f_k(p_{i_1\ldots i_k 0})$ and $f_k(p_{i_1\ldots i_k 1})$, they are orthogonal to the ball
$f_k(\Ball^n(p_{i_1\ldots i_k},\delta_k'))$ at these points and then they quickly meet and coincide with 
$J_{i_1\ldots i_k 0}$ and $J_{i_1\ldots i_k 1}$.

We find $\delta_{k+1}>0$ so small that
\begin{itemize}
\item The balls
$$
\overline{\Ball}^n(p_{i_1\ldots i_k 0},\delta_{k+1}), \overline{\Ball}^n(p_{i_1\ldots i_k 1},\delta_{k+1}) \subset \Ball^n(p_{i_1\ldots i_k},\delta_k')
$$
are disjoint.
\item There are tentacles
$$
\gamma^{i_1\ldots i_k i}_{\delta_{k+1}}:\overline{\Ball}^n(p_{i_1\ldots i_k i},\delta_{k+1})\to\bbbr^{n+1}
\quad
\text{for $i=0,1$}
$$
such that
\begin{itemize}
\item[$\Diamond$]
$\gamma^{i_1\ldots i_k i}_{\delta_{k+1}}(p_{i_1\ldots i_k i})=A_{i_1\ldots i_k i}$.
\item[$\Diamond$]
The image of $\gamma^{i_1\ldots i_k i}_{\delta_{k+1}}$ is in the $2^{-(k+1)}$ neighborhood of the curve 
$J_{i_1\ldots i_k i}$.
\item[$\Diamond$]
The tentacles do not intersect and they avoid the set $T_{k+1}$ 
\item[$\Diamond$] 
We have
$$
\int_{\Ball^n(p_{i_1\ldots i_k i},\delta_{k+1})} |D\gamma^{i_1\ldots i_k i}_{\delta_{k+1}}|^n\, dx < 4^{-n(k+1)}.
$$
\end{itemize}
\end{itemize}
The condition about the distance of the tentacle to the curve $J_{i_1\ldots i_k i}$ can be easily guaranteed, because
the curve $\gamma^{i_1\ldots i_k i}$ can be arbitrarily close to $J_{i_1\ldots i_k i}$ and the tentacle can be arbitrarily
thin.

Because the curves $\gamma^{i_1\ldots i_k i}$, $i=0,1$, are orthogonal to the balls
$f_k(\Ball^n(p_{i_1\ldots i_k},\delta_k'))$ at the points
$\gamma^{i_1\ldots i_k i}(0)=f_k(p_{i_1\ldots i_k i})$,
by choosing appropriate orthonormal frames in the definition of 
$\gamma^{i_1\ldots i_k i}_{\delta_{k+1}}$ we may guarantee one more condition
\begin{itemize}
\item 
$\gamma^{i_1\ldots i_k i}_{\delta_{k+1}} = f_k$
in
$\overline{\Ball}^n(p_{i_1\ldots i_k i},\delta_{k+1})\setminus\Ball^n(p_{i_1\ldots i_k i},\delta_{k+1}/2)$.
\end{itemize}
We are using here the fact that both $\gamma^{i_1\ldots i_k i}_{\delta_{k+1}}$ and $f_k$ are isometries in that annulus.
Now we define
$$
f_{k+1} =
\left\{
\begin{array}{ccc}
\gamma_{\delta_k}^{i_1\ldots i_k i_{k+1}}    & \mbox{in} & \mbox{$\overline{\Ball}^n(p_{i_1\ldots i_{k+1}},\delta_{k+1})$ for $i_1\ldots i_{k+1}\in \I^{k+1}$},\\
f_{k}                              & \mbox{in} & \Sph^n\setminus\bigcup_{i_1\ldots i_{k+1}\in \I^{k+1}}\Ball^n(p_{i_1\ldots i_{k+1}},\delta_{k+1}).
\end{array}
\right.
$$
As before $f_{k+1}$ is a smooth embedding of $\Sph^n$ into $\bbbr^{n+1}$ whose image is disjoint from $T_{k+1}$.

Let
$$
W_k=\bigcup_{i_1\ldots i_k\in \I^k} \overline{\Ball}^n(p_{i_1\ldots i_k},\delta_k).
$$
Clearly, $W_k$ is a decreasing sequence of compact sets and 
$$
E:=\bigcap_{k=1}^\infty W_k
$$
is a Cantor set $E\subset\Ball^n\subset\Sph^n$. By making the sequence $\delta_k$ converge to zero sufficiently fast we may guarantee that
the Hausdorff dimension of $E$ equals zero.
Similarly as in the case of the ternary Cantor set, points in the set $E$ can be encoded by infinite binary sequences
For $\i=i_1 i_1\ldots \in\I^\infty$ we define
$$
\{ e_{\i}\}=\bigcap_{k=1}^\infty \overline{\Ball}^n(p_{i_1\ldots i_k,\delta_k})
\quad
\text{so}
\quad
E=\bigcup_{\i\in\I^\infty} \{e_{\i}\}.
$$
Now we define
$$
f(x)=
\left\{
\begin{array}{ccc}
x						&  \text{if} 	& x\in\Sph^n\setminus W_1,\\
f_k(x)			&  \text{if}	& x\in W_k\setminus W_{k+1},\ k=1,2,\ldots\\
c_{\i} 			&  \text{if}	& x=e_{\i}\in E,\ \i\in\I^\infty.
\end{array}
\right.
$$
Recall that $c_{\i}=f(\c_{\i})$ is a point of the Cantor set $C$.

\begin{lemma}
\label{l3}
$f=f_k$ when restricted to $\Sph^n\setminus W_{k+1}$.
\end{lemma}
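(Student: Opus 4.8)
The plan is to unwind the two definitions and reduce everything to the one matching property that was built into the inductive construction: $f_{m}=f_{m-1}$ off the union of the open balls $\Ball^n(p_{i_1\ldots i_m},\delta_m)$, i.e. off $\inter W_{m}$ (and, at the first step, $f_1=\id$ off $\inter W_1$). Here I use that $W_{m}$ is a disjoint union of closed balls, so $\inter W_{m}=\bigcup_{i_1\ldots i_m\in\I^m}\Ball^n(p_{i_1\ldots i_m},\delta_m)$. First I would record two purely structural facts. Since $W_1\supset W_2\supset\cdots$, the set $\Sph^n\setminus W_{k+1}$ is the disjoint union of $\Sph^n\setminus W_1$ with the shells $W_j\setminus W_{j+1}$, $1\le j\le k$; and since $E=\bigcap_m W_m\subset W_{k+1}$, no point $e_{\i}$ of $E$ lies in $\Sph^n\setminus W_{k+1}$, so the third clause in the definition of $f$ is vacuous there. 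Consequently it suffices to check $f_k=\id$ on $\Sph^n\setminus W_1$ and $f_k=f_j$ on $W_j\setminus W_{j+1}$ for each $1\le j\le k$.

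The key step is the following claim: for all integers $m\ge j\ge 1$ one has $f_m=f_j$ on $\Sph^n\setminus\inter W_{j+1}$, and likewise $f_m=\id$ on $\Sph^n\setminus\inter W_1$. I would prove the first assertion by induction on $m$ with $j$ fixed. The base case $m=j$ is trivial. For the inductive step, since the $W_s$ are decreasing and $m+1\ge j+1$ we have $\inter W_{m+1}\subset\inter W_{j+1}$, hence $\Sph^n\setminus\inter W_{j+1}\subset\Sph^n\setminus\inter W_{m+1}$; the matching property then gives $f_{m+1}=f_m$ on $\Sph^n\setminus\inter W_{j+1}$, and composing with the inductive hypothesis finishes it. The assertion about $\id$ is the same induction, anchored at $f_1=\id$ on $\Sph^n\setminus\inter W_1$ and using $\inter W_{m+1}\subset\inter W_1$.

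Finally I would assemble the pieces. Fix $x\in\Sph^n\setminus W_{k+1}$. If $x\in\Sph^n\setminus W_1\subset\Sph^n\setminus\inter W_1$, then $f(x)=x$ by definition and $f_k(x)=x$ by the claim, so $f(x)=f_k(x)$. Otherwise $x\in W_j\setminus W_{j+1}$ for a (unique) $j$ with $1\le j\le k$; then $f(x)=f_j(x)$, and since $x\notin W_{j+1}\supset\inter W_{j+1}$ and $k\ge j$, the claim yields $f_k(x)=f_j(x)$, so again $f(x)=f_k(x)$. Hence $f=f_k$ on $\Sph^n\setminus W_{k+1}$.

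I do not expect a genuine obstacle here; the argument is bookkeeping with the nested sequence $(W_m)$. The only point requiring a little care is that the shells $W_j\setminus W_{j+1}$ appearing in the definition of $f$ are half-open, so the identity $f_{m}=f_{m-1}$ must be invoked on the complement of the \emph{open} balls $\inter W_m$ rather than of the closed balls $W_m$ — which is exactly how the construction was arranged, using that consecutive tentacle maps agree with (indeed are the identity on, or coincide with $f_{m-1}$ on) the matching annuli. Once this is noted, the rest is immediate.
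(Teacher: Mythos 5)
Your proof is correct and is essentially the same argument as the paper's: both rest on the nested structure of the $W_m$, the matching property $f_{m+1}=f_m$ off $W_{m+1}$ (built into the construction of $f_{m+1}$), and the piecewise definition of $f$. The paper runs a single induction directly on the statement of the lemma, whereas you extract a helper claim and finish by case analysis on the shell containing $x$; these are just two packagings of the same bookkeeping, and your care with the open-versus-closed distinction is a harmless refinement the paper elides.
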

\begin{proof}
The lemma can be easily proved by induction. Let $f_0=\id$.
By the definition of $f$, $f=f_0$ in $\Sph^n\setminus W_1$.
Suppose now that $f=f_k$ in 
$\Sph^n\setminus W_{k+1}$. According to the construction of $f_{k+1}$,
$f_{k+1}=f_k$ in $\Sph^n\setminus W_{k+1}$, but the definition of $f$ yields
$f=f_{k+1}$ in $W_{k+1}\setminus W_{k+2}$ so $f=f_{k+1}$ in
$$
(\Sph^n\setminus W_{k+1})\cup(W_{k+1}\setminus W_{k+2})=\Sph^n\setminus W_{k+2}.
$$
This proves the lemma.
\end{proof}

Since each of the mappings $f_k$ is a smooth embedding whose image 
does not intersect with the Cantor set $C$ it follows from the lemma that $f$ restricted to the open set $\Sph^n\setminus E$
is a smooth embedding and $f(\Sph^n\setminus E)\cap C=\emptyset$. In the remaining Cantor set $E$, the mapping $f$ is defined as a bijection that maps
$E$ onto $C$. Therefore the mapping $f$ is one-to-one in $\Sph^n$ and $C\subset f(\Sph^n)$. 

It remains to prove that $f$ is continuous and that $f\in W^{1,n}$. 

First we will prove that $f\in W^{1,n}$. The mapping $f$ is bounded and hence its components are in $L^n$.
Since the mapping $f$ is smooth outside the Cantor set $E$ of Hausdorff dimension zero, according to the 
characterization of the Sobolev space by absolute continuity on lines, \cite[Section~4.9.2]{EG}, it suffices to show that the classical derivative of $f$
defined outside of $E$ (and hence a.e. in $\Sph^n$) belongs to $L^n(\Sph^n)$.
We have
\begin{eqnarray*}
\int_{\Sph^n}|Df|^n
& = & 
\int_{\Sph^n\setminus E} |Df|^n =
\int_{\Sph^n\setminus W_1} |Df|^n +
\sum_{k=1}^\infty \int_{W_k\setminus W_{k+1}} |Df_k|^n \\
& \leq &
\int_{\Sph^n\setminus W_1} |Df|^n +
\sum_{k=1}^\infty \int_{W_k} |D f_k|^n.
\end{eqnarray*}
Since $f(x)=x$ in $\Sph^n\setminus W_1$, we do not have to worry about the first term on the right hand side and it remains to estimate the infinite sum.

Note that $f_k=\gamma_{\delta_k}^{i_1\ldots i_k}$ in $\overline{\Ball}^n(p_{i_1\ldots i_k},\delta_k)$ so \eqref{e5} yields
$$
\int_{\overline{\Ball}^n(p_{i_1\ldots i_k},\delta_k)} |Df_k|^n =
\int_{\overline{\Ball}^n(p_{i_1\ldots i_k},\delta_k)} |D\gamma^{i_1\ldots i_k}_{\delta_k}|^n < 4^{-nk}.
$$
Hence
$$
\int_{W_k} |Df_k|^n = \sum_{i_1\ldots i_k\in \I^k} 
\int_{\overline{\Ball}^n(p_{i_1\ldots i_k},\delta_k)} |Df_k|^n  < 2^k\cdot 4^{-nk} < 2^{-nk}
$$
so
$$
\sum_{k=1}^\infty \int_{W_k} |D f_k|^n<\sum_{k=1}^\infty 2^{-nk}<\infty.
$$
This completes the proof  that $f\in W^{1,n}$.

\begin{remark}
Replacing the estimate in \eqref{e5} by $\eps 4^{-nk}$ one can easily modify the construction so that the mapping $f$ will have 
an arbitrarily small Sobolev norm $W^{1,n}$.
\end{remark}

It remains to prove that $f$ is continuous. We will need 
\begin{lemma}
\label{l2}
$f(\overline{\Ball}^n(p_{i_1\ldots i_k},\delta_k))$ is contained in the
$$
r_k:= 2^{-k+4}+\diam C_{i_1\ldots i_{k-1}}
$$
neighborhood of $C_{i_1\ldots i_{k-1}}$.
\end{lemma}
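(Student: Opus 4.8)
The plan is to follow each point of the ball $\overline{\Ball}^n(p_{i_1\ldots i_k},\delta_k)$ through the definition of $f$ and then compose two nested neighborhood estimates: the first says that every tentacle stays in a small neighborhood of the curve it was built around, and the second, Lemma~\ref{l1}, says that every such curve stays in a controlled neighborhood of the corresponding piece of the Cantor set $C$. Throughout one should take $k\ge 2$ (the case relevant for the continuity of $f$), so that $C_{i_1\ldots i_{k-1}}$ is an honest piece of $C$. Write $B=\overline{\Ball}^n(p_{i_1\ldots i_k},\delta_k)$.

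The first step I would carry out is the combinatorial bookkeeping of the nested balls. From the construction, every level-$m$ ball $\overline{\Ball}^n(p_{j_1\ldots j_m},\delta_m)$ with $m\ge k$ is contained in $\overline{\Ball}^n(p_{j_1\ldots j_k},\delta_k)$ (chain the inclusions $\overline{\Ball}^n(p_{j_1\ldots j_\ell},\delta_\ell)\subset\Ball^n(p_{j_1\ldots j_{\ell-1}},\delta_{\ell-1}')$), and distinct level-$k$ balls are disjoint; hence such a ball can meet $B$ only when $j_1\ldots j_k=i_1\ldots i_k$. Using this I would split $B$ according to the definition of $f$. If $x\in E\cap B$ then $x=e_{\mathbf j}$ with $\mathbf j$ extending $i_1\ldots i_k$, so $f(x)=c_{\mathbf j}\in C_{i_1\ldots i_k}\subset C_{i_1\ldots i_{k-1}}$ and $\dist(f(x),C_{i_1\ldots i_{k-1}})=0$. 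If $x\in B\setminus E$ then $x\in W_m\setminus W_{m+1}$ for a unique $m$, and since $B\subset W_k$ we get $m\ge k$; by the nesting remark $x$ lies in a single ball $\overline{\Ball}^n(p_{\mathbf j},\delta_m)$ with $\mathbf j=i_1\ldots i_k j_{k+1}\ldots j_m$, and by Lemma~\ref{l3} together with the definition of $f_m$ one has $f(x)=\gamma^{\mathbf j}_{\delta_m}(x)$, which lies in the $2^{-m}$-neighborhood of the branch $J_{\mathbf j}$.

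Finally I would feed this into Lemma~\ref{l1}. The branch $J_{j_1\ldots j_m}$ has order $m\ge k\ge 2$, so it is contained in the $2^{-(m-1)+2}+\diam C_{j_1\ldots j_{m-1}}$-neighborhood of $C_{j_1\ldots j_{m-1}}$; since $j_1\ldots j_{m-1}$ extends $i_1\ldots i_{k-1}$, we have $C_{j_1\ldots j_{m-1}}\subset C_{i_1\ldots i_{k-1}}$ and $\diam C_{j_1\ldots j_{m-1}}\le\diam C_{i_1\ldots i_{k-1}}$. Composing the two estimates, $f(x)$ lies in the $\bigl(2^{-m}+2^{-m+3}+\diam C_{i_1\ldots i_{k-1}}\bigr)$-neighborhood of $C_{i_1\ldots i_{k-1}}$, and since $2^{-m}+2^{-m+3}=9\cdot 2^{-m}\le 9\cdot 2^{-k}<2^{-k+4}$ this is inside the $r_k$-neighborhood; together with the trivial case $x\in E\cap B$, the lemma follows. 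I expect the only real friction to be the label bookkeeping in the middle paragraph — making sure that the piece of $C$ produced by Lemma~\ref{l1} is always contained in $C_{i_1\ldots i_{k-1}}$ and of no larger diameter; the two-step neighborhood chase and the final arithmetic are routine.
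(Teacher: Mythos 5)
Your proof is correct and follows the same route as the paper's: split $\overline{\Ball}^n(p_{i_1\ldots i_k},\delta_k)$ into its intersection with $E$ (where $f$ lands directly in $C_{i_1\ldots i_{k-1}}$) and its complement (where you locate $x$ in a unique level-$m$ ball with $m\ge k$, identify $f(x)=\gamma^{\mathbf j}_{\delta_m}(x)$ via Lemma~\ref{l3}, and then chain the $2^{-m}$-neighborhood of $J_{\mathbf j}$ with the Lemma~\ref{l1} neighborhood of $C_{j_1\ldots j_{m-1}}\subset C_{i_1\ldots i_{k-1}}$). The only difference is cosmetic: you make the nesting-and-disjointness bookkeeping of the balls explicit and simplify $2^{-m}+2^{-m+3}=9\cdot2^{-m}$ directly, whereas the paper absorbs it into $2^{-s+4}$; both give the same bound.
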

\begin{proof}
If $x\in\overline{\Ball}^n(p_{i_1\ldots i_k},\delta_k)\cap E$,
then $x=e_{\i}$ for some $\i\in\I^\infty$ with the first $k$ binary
digits equal $i_1,\ldots,i_k$ i.e.,
$\i=i_1\ldots i_k\ldots$ Hence
$$
f(x)=f(e_{\i}) = c_{\i} =c_{i_1\ldots i_k\ldots}\in C_{i_1\ldots i_{k-1}}.
$$
If $x\in\overline{\Ball}^n(p_{i_1\ldots i_k},\delta_k)\setminus E$,
then there is $s\geq k$ such that
$$
x\in\overline{\Ball}^n(p_{i_1\ldots i_k i_{k+1}\ldots i_s},\delta_s)\setminus W_{s+1}\subset W_s\setminus W_{s+1}
$$
for some binary numbers $i_{k+1},\ldots,i_s$. 

Since $f=f_s$ in $W_s\setminus W_{s+1}$ and
$f_s=\gamma^{i_1\ldots i_s}_{\delta_s}$ in
$\overline{\Ball}^n(p_{i_1\ldots i_s},\delta_s)$ we conclude that
$f(x)=\gamma^{i_1\ldots i_s}_{\delta_s}(x)$.

It remains to show that the image of $\gamma^{i_1\ldots i_s}_{\delta_s}$
is contained in the $r_k$ neighborhood of $C_{i_1\ldots i_{k-1}}$.

By Lemma~\ref{l1}, $J_{i_1\ldots i_s}$ is in the 
$2^{-s+3}+\diam C_{i_1\ldots i_{s-1}}$ neighborhood of $C_{i_1\ldots i_{s-1}}$.
Also the image of $\gamma^{i_1\ldots i_s}_{\delta_s}$ is contained in the $2^{-s}$
neighborhood of $J_{i_1\ldots i_s}$ so the image of
$\gamma^{i_1\ldots i_s}_{\delta_s}$ is contained in the
$$
2^{-s+4}+\diam C_{i_1\ldots i_{s-1}}
$$
neighborhood of $C_{i_1\ldots i_{s-1}}$. Since
$C_{i_1\ldots i_{s-1}}\subset C_{i_1\ldots i_{k-1}}$
and
$$
2^{-s+4}+\diam C_{i_1\ldots i_{s-1}} \leq 2^{-k+4} +\diam C_{i_1\ldots i_{k-1}}
$$
the lemma follows.
\end{proof}

Now we are ready to complete the proof of continuity of $f$. Clearly, $f$ is continuous on $\Sph^n\setminus E$
so it remains to prove its continuity on the Cantor set $E$.
Let $e_{\i}\in E$, $\i=i_1 i_2\ldots\in\I^\infty$. Since 
$f(e_{\i})=c_{\i}$ we need to show that for any $\eps>0$ there is $\delta>0$
such that if $|x-e_{\i}|<\delta$, then $|f(x)-c_{\i}|<\eps$.

Let $\eps>0$ be given. Let $k$ be so large that
$$
2^{-k+4}+ 2\, \diam C_{i_1\ldots i_{k-1}}<\eps
$$
and let $\delta>0$ be so small that
$$
\Ball^n(e_{\i},\delta)\subset \Ball^n(p_{i_1\ldots i_k},\delta_k).
$$
If $|x-e_{\i}|<\delta$, then
$x\in\overline{\Ball}^n(p_{i_1\ldots i_k},\delta_k)$
so by Lemma~\ref{l2}, $f(x)$ belongs to the $r_k$ neighborhood of the set
$C_{i_1\ldots i_{k-1}}$. Since
$c_{\i}\in C_{i_1\ldots i_{k-1}}$, the distance $|f(x)-c_{\i}|$ is less than
$$
r_k+\diam C_{i_1\ldots i_{k-1}} = 2^{-k+4} + 2\, \diam C_{i_1\ldots i_{k-1}}<\eps.
$$
The proof is complete.
\hfill $\Box$

\section{Proof of Theorem~\ref{main2}}
\label{theproof2}

Antoine's necklace $A$ is a Cantor set that is constructed iteratively as follows: Inside a solid torus $A_0$ in $\bbbr^3$ (iteration $0$) we  
construct a chain $A_1$ (iteration $1$) of linked solid tori so that the chain cannot be contracted to a point inside the torus $A_0$. 
$A_1$ is a subset of $\bbbr^3$, the union of the linked tori.
Iteration $n+1$ is obtained from the  iteration $n$ by constructing a chain of tori inside each of the tori of $A_n$ (i.e. inside each of the connected components of $A_n$).
Again $A_{n+1}$ is a subset of $\bbbr^3$ -- the union of all tori in this step of construction.
We also assume that the maximum of the diameters of tori in iteration $n$ converges to zero as $n$ approaches to infinity.
Antoine's necklace is the intersection $A=\bigcap_{n=0}^\infty A_n$.
For more details we refer to \cite{sher}.

Antoine's necklace has the following properties:
\begin{itemize}
\item $\bbbr^3\setminus A$ is not simply connected.
\item For any $x\in A$ and any $r>0$, $A\cap \Ball^3(x,r)$ contains Antoine's necklace.
\end{itemize}
The first property is well known \cite[Chapter~18]{moise} while the second one is quite obvious: $\Ball^3(x,r)$ contains
one of the tori $T$ of one of the iterations (actually infinitely many of such tori) and  $T\cap A$ is also Antoine's necklace because of the iterative
nature of the procedure.

We also need the following observation.

\begin{lemma}
\label{L1} 
Let $A$ be Antoine's necklace and let $M$ be a smooth $2$-dimensional surface in $\bbbr^3$. Then $A\cap M$
is contained in the closure of $A\setminus M$, $A\cap M\subset\overline{A\setminus M}$.
\end{lemma}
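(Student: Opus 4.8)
The plan is to show that no point of $A\cap M$ is isolated in $A$ in the direction transverse to $M$; more precisely, that every neighborhood of such a point contains points of $A$ not lying on $M$. Since a smooth $2$-dimensional surface $M$ is locally the zero set of a nonvanishing smooth function, it is a measure-zero, nowhere-dense, locally closed set that locally separates $\bbbr^3$ into two open half-spaces. The key structural input is the second bulleted property of Antoine's necklace: for any $x\in A$ and any $r>0$, the set $A\cap\Ball^3(x,r)$ contains a copy of Antoine's necklace — in particular it contains one of the solid tori $T$ of some iteration with $T\cap A$ again an Antoine necklace, hence a nonempty perfect (in particular uncountable, connected-component-free but topologically rich) set.

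**First I would** fix $x\in A\cap M$ and $r>0$, and argue by contradiction: suppose $\Ball^3(x,r)\cap A\subset M$. Shrinking $r$ if necessary, I may assume $M\cap\Ball^3(x,r)$ is a connected smooth graph, so that $\Ball^3(x,r)\setminus M$ has exactly two connected components $U^+$ and $U^-$, each open and nonempty. By the necklace property, $\Ball^3(x,r)$ contains a solid torus $T$ of some iteration stage with $T\cap A$ an Antoine necklace; in particular $T\cap A$ is a nonempty compact set with no isolated points, hence infinite, and by our contradiction hypothesis $T\cap A\subset M$. **The main obstacle**, and the heart of the argument, is to derive a contradiction from the fact that an entire Antoine necklace $T\cap A$ would be forced to lie inside a smooth surface $M$. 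I would exploit that $M\cap T$ is (locally) a $2$-manifold while the complement $\bbbr^3\setminus A$ is not simply connected because of the linking of the chain of tori; a smooth surface cannot contain the necklace, because the chain of sub-tori winds around and links in a way incompatible with living in a tame $2$-sphere (equivalently: the necklace is a wildly embedded Cantor set whose complement has nontrivial $\pi_1$, whereas any Cantor subset of a smooth surface $M\cong$ plane has simply connected complement in $\bbbr^3$).

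**Concretely**, I would use the following cleaner route. Suppose $T\cap A\subset M$. Pick a loop $\ell$ in $\bbbr^3\setminus A$ that is linked with (i.e., not contractible in the complement of) one of the solid tori of the iteration lying inside $T$; such a loop exists by the nontriviality of the chain. Since $T\cap A\subset M$ and $M$ is a tamely embedded surface, one can push $\ell$ slightly off $M$ and then contract it within $\Ball^3(x,r)\setminus M\subset\Ball^3(x,r)\setminus(T\cap A)$, using that each component of $\Ball^3(x,r)\setminus M$ is an open topological half-ball and hence simply connected — but this contradicts $\ell$ being linked with a sub-torus of $A$, i.e., not contractible in $\bbbr^3\setminus A$. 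Hence $\Ball^3(x,r)\cap A\not\subset M$, so there is a point of $A\setminus M$ in every neighborhood of $x$, which is exactly the statement $A\cap M\subset\overline{A\setminus M}$. The only routine verifications left are the local normal-form statements for $M$ (every point of a smooth surface has a neighborhood in which $M$ is a connected graph separating the neighborhood into two simply connected pieces) and the elementary observation that a loop in an open subset of $\bbbr^3$ disjoint from a compact set can be perturbed to miss a nearby surface while staying disjoint from that compact set.
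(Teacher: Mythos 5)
Your approach is genuinely different from the paper's. The paper flattens out the situation: it applies a diffeomorphism $\Phi$ of $\bbbr^3$ sending $M\cap\Ball^3(x,r)$ to a planar disc, then invokes Moise's theorem \cite[Theorem~13.7]{moise} that every Cantor set in a planar disc is \emph{tame} (ambiently homeomorphic to the ternary Cantor set on a line), extends that taming homeomorphism trivially in the $z$-direction, and concludes that the cylinder minus $\Phi(\tilde A)$ is simply connected, contradicting the wildness inherited by $\tilde A$. You instead work locally with the separation $\Ball^3(x,r)\setminus M=U^+\sqcup U^-$ and try to push a non-contractible linking loop entirely into one of the two simply connected sides. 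Both strategies are reasonable, but they invoke different off-the-shelf results: the paper leans on Moise's taming theorem, while yours would lean on dimension-theoretic facts about $0$-dimensional sets in surfaces.

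There is, however, a real gap in your argument at the decisive step. You claim that ``one can push $\ell$ slightly off $M$\ldots while staying disjoint from'' $\tilde A$, and relegate this to ``the elementary observation that a loop in an open subset of $\bbbr^3$ disjoint from a compact set can be perturbed to miss a nearby surface while staying disjoint from that compact set.'' As stated, that observation is false (take the compact set to be the surface itself), and even in your specific situation it is not a small perturbation: the loop $\ell$ may cross $M\cap\Ball^3(x,r)$ transversally at several points (none in $\tilde A$), and making it disjoint from $M$ means homotoping entire arcs of $\ell$ from $U^-$ across $M$ into $U^+$. That homotopy is not routine. To carry it out you need at least: (i) that $\tilde A$ does not separate the disc $D=M\cap\Ball^3(x,r)$ (a dimension-theoretic fact: a compact $0$-dimensional set cannot separate a $2$-manifold), so that the two crossing points of an arc in $U^-$ can be joined inside $D\setminus\tilde A$; and (ii) that $U^-\cup(D\setminus\tilde A)$ is simply connected, so the arc can be homotoped rel endpoints into $D\setminus\tilde A$ and then pushed slightly into $U^+$, reducing the crossing number by two; iterating this removes all crossings. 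Alternatively, one can argue with a spanning disc $D'$ for $\ell$ in $\Ball^3(x,r)$, made transversal to $M$, and perturb it so the resulting $1$-manifold $D'\cap M$ avoids the $0$-dimensional set $\tilde A$. Either way, the needed input is not an elementary perturbation but a genuine topological argument of the same nature as (and essentially a hands-on substitute for) the Moise taming theorem that the paper cites. You should also say explicitly that the linking loop $\ell$ is chosen around a sufficiently deep sub-torus so that it lies inside $\Ball^3(x,r)$, which the paper handles implicitly by phrasing everything inside the cylinder.
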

\begin{proof}
Suppose to the contrary that for some $x\in A\cap M$, and some $r>0$ we have $\Ball^3(x,r)\cap(A\setminus M)=\emptyset$. By taking $r>0$ sufficiently small we can assume that 
$\Ball^3(x,r)\cap M$ is diffeomorphic to a disc. More precisely, there is a diffeomorphism $\Phi$ of $\bbbr^3$ which maps $\Ball^3(x,r)\cap M$ onto the ball $\Ball^2(0,2)$ in the 
$xy$-coordinate plane. 
Note that by the second property listed above $\Ball^3(x,r)\cap A$ contains 
Antoine's necklace denoted by $\tilde{A}$.  Since $\tilde{A}\cap (A\setminus M)=\emptyset$ we have 
$\tilde{A}\subset \Ball^3(x,r)\cap M$ so $\Phi(\tilde{A})\subset \Ball^2(0,2)$ and $(\Ball^2(0,2)\times\bbbr)\setminus\Phi(\tilde{A})$ is not simply connected.
By \cite[Theorem~13.7 p.93]{moise} there is a homeomorphism $h$ of the ball $\Ball^2(0,2)$ onto itself in such that $\Phi(\tilde{A})$ is mapped
onto the standard ternary Cantor set $\Can$ on the $x$-axis. This homeomorphism can be trivially extended to a homeomorphism of $\Ball^2(0,2)\times\bbbr$ by
letting $H(x,y,z)=(h(x,y),z)$. Clearly the complement of $H(\Phi(\tilde{A}))$ in $\Ball^2(0,2)\times\bbbr$ is not simply connected. On the other hand since
$H(\Phi(\tilde{A}))=\Can$, the complement of this set is simply connected in $\Ball^2(0,2)\times\bbbr$ which is a contradiction. 
\end{proof}

The key argument in our proof is the following result of Sher \cite[Corollary~1]{sher} that we state as a lemma.
\begin{lemma}
\label{L2}
There is an uncountable family of Antoine's necklaces $\{A_i\}_{i\in I}$ such that for
any $i,j\in I$, $i\neq j$ there is no homeomorphism $h:\bbbr^3\to\bbbr^3$ with the property that $h(A_i)=A_j$.
\end{lemma}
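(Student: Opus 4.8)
Since the statement is verbatim Corollary~1 of Sher~\cite{sher}, the shortest route is simply to cite it; I will instead outline the structure of the argument. The plan is to exhibit a concrete uncountable family of Antoine's necklaces indexed by certain integer sequences and to distinguish its members by a combinatorial invariant extracted from their defining sequences. Recall that an Antoine necklace is not one set but a construction: fix integers $m_k\geq 4$, and at the $k$-th stage replace each component solid torus $T$ of the previous stage by a chain of $m_k$ pairwise unknotted, cyclically linked solid tori whose union is non-contractible inside $T$, with diameters shrinking to $0$; let $A_{\mathbf{m}}=\bigcap_k M_k$ be the resulting Cantor set, where $M_k$ is the union of the stage-$k$ tori and $\{M_k\}$ is its \emph{defining sequence}.

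The core ingredient --- and, I expect, the main obstacle --- is Sher's rigidity theorem for such defining sequences: if $A$ and $A'$ are Antoine necklaces with defining sequences $\{M_k\}$ and $\{M_k'\}$ by disjoint solid tori and $h\colon\bbbr^3\to\bbbr^3$ is a homeomorphism with $h(A)=A'$, then after passing to cofinal subsequences and composing with an ambient isotopy one may assume $h(M_k)=M_k'$ for all $k$, respecting the nesting. It follows that the sequence of \emph{patterns} describing how the stage-$(k+1)$ chain sits inside a stage-$k$ component --- in particular the link numbers $m_k$ --- is, up to passing to a cofinal subsequence, an invariant of the pair $(\bbbr^3,A)$ and independent of the choices made in the construction. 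Establishing this is the technical heart of \cite{sher}: one analyses how loops in $\bbbr^3\setminus A$ link the tori of a defining sequence and argues that a homeomorphism of $\bbbr^3$ must carry this linking data across, which forces a cofinal matching of defining tori.

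Granting the rigidity theorem, the conclusion is a counting argument. For $S\subseteq\bbbn$ put $m_k^S=5$ if $k\in S$ and $m_k^S=4$ otherwise; each $\mathbf{m}^S=(m_1^S,m_2^S,\dots)$ is admissible and yields an Antoine necklace $A_S\subset\bbbr^3$. If some homeomorphism $h$ of $\bbbr^3$ satisfies $h(A_S)=A_{S'}$, then by rigidity the link numbers agree cofinally, so $m_k^S=m_k^{S'}$ for all large $k$, i.e.\ $S\bigtriangleup S'$ is finite. Since $\bbbn$ has uncountably many subsets but each finite-modification class is countable, we may pick an uncountable $\{S_i\}_{i\in I}$ with $S_i\bigtriangleup S_j$ infinite whenever $i\neq j$; then $\{A_{S_i}\}_{i\in I}$ is an uncountable family of Antoine's necklaces no two of which are carried to one another by a homeomorphism of $\bbbr^3$, which is the assertion of the lemma. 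For the details of the rigidity step we refer to \cite[Corollary~1]{sher}.
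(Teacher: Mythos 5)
Your proposal is correct and takes essentially the same approach as the paper: the paper states this lemma simply by quoting Corollary~1 of Sher~\cite{sher} verbatim, offering no independent proof. Your added outline of Sher's internal argument (rigidity of defining sequences under ambient homeomorphisms, plus a counting argument over chain-length sequences modulo finite modification) is a faithful sketch of how that corollary is established, but both you and the authors ultimately defer to \cite{sher} for the substance.
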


For each of the sets $A_i$, let $f_i:\Sph^2\to\bbbr^3$ be an embedding as in Theorem~\ref{main} with the property that $A_i\subset f_i(\Sph^2)$ and $f_i(\Sph^2)\setminus A_i$ is a smooth surface (but not closed). 
It remains to prove that for $i\neq j$ there is no homeomorphism $h:\bbbr^3\to\bbbr^3$ such that $h(f_i(\Sph^2))=f_j(\Sph^2)$. 
Suppose to the contrary that such a homeomorphism $h$ exists. We will show that $h(A_i)=A_j$ which is a contradiction with Lemma~\ref{L2}.

Clearly,
$$
h(A_i)=(h(A_i)\cap A_j)\cup (h(A_i)\cap (f_j(\Sph^2)\setminus A_j)).
$$
Since $f_j(\Sph^2)\setminus A_j$ is a smooth surface, Lemma~\ref{L1} yields
$$
h(A_i)\cap (f_j(\Sph^2)\setminus A_j)\subset
\overline{h(A_i)\setminus (f_j(\Sph^2)\setminus A_j)}=
\overline{h(A_i)\cap A_j}\subset A_j
$$
so $h(A_i)\cap (f_j(\Sph^2)\setminus A_j)=\emptyset$ and hence $h(A_i)\subset A_j$.
Applying the same argument to $h^{-1}$ we obtain that $h^{-1}(A_j)\subset A_i$ so $A_j\subset h(A_i)$ and hence
$h(A_i)=A_j$. The proof is complete.
\hfill $\Box$

\end{document}